\newtheorem{theorem}{Theorem}[section]
\newtheorem{lemma}[theorem]{Lemma}
\newtheorem{proposition}[theorem]{Proposition}
\theoremstyle{definition}
\newtheorem{definition}[theorem]{Definition}
\newtheorem{example}[theorem]{Example}
\theoremstyle{remark}
\newtheorem{remark}[theorem]{Remark}
\numberwithin{equation}{section}
\begin{document}

\setcounter{page}{1}

\title[FPT on $G$-metric spaces]{
	$n$-tuple fixed point in $\phi$-ordered $G$-metric spaces}

\author[Ya\'e Ulrich Gaba]{Ya\'e Ulrich Gaba$^{1,2}$}

\author{Collins Amburo Agyingi$^{2}$}

\email{\textcolor[rgb]{0.00,0.00,0.84}{gabayae2@gmail.com
}}

\address{$^{1}$\'Ecole Normale Sup\'erieure (ENS) de Natitingou, BP 72 Natitingou, B\'enin.}

\address{$^{2}$ Department of Mathematical Sciences, North West University, Private Bag
	X2046, Mmabatho 2735, South Africa.}

\email{\textcolor[rgb]{0.00,0.00,0.84}{: collins.agyingi@nwu.ac.za
}}

\subjclass[2010]{Primary 47H05; Secondary 47H09, 47H10.}

\keywords{$G$-metric; $\phi$-order;
	weakly related; $n$-tuple fixed point fixed point.}

\begin{abstract}
	We use three seminal approaches in the study of fixed point theory, the so called $G$-metrics, multidimensional fixed points and partially ordered spaces.
	More precisely, we extend known results from the theory of quasi-pseudometric spaces to the $G$-metric space setting. In particular, we show the existence of $n$-tuple fixed points (resp. common $n$-tuple fixed point) for a non-decreasing mapping (resp. a pair of weakly related mappings) in a $\phi$-ordered $G$-metric space.

\end{abstract} 

\maketitle

\section{Introduction and preliminary results.}
Metric spaces have been extensively used to solve major problems appearing in quantitative sciences and considering various
generalizations of metrics and metric spaces  is a natural step in order to broaden the scope of applied sciences. In
this regard, $G$-metric spaces ($G_b$-metric spaces), cone metric spaces and quasi-pseudometric
spaces are relevant instances. Most of these applications are done via fixed point theory whose relevance is no more to be demonstrated as it has been extensively discussed in many divisions of applied sciences.

\vspace*{0.2cm}

Recently, the study of multidimensional fixed point has been at the center of very active research, see \cite{ref3,ref4,ref2, ref1}. The results obtained appear both in metric spaces and in generalized metric spaces, of which $G$-metrics is our space of focus here (see \cite{Mustafa}).

In this article, we replace the left $K$-complete quasi-pseudometric spaces $(X, d)$ by $\phi$-ordered complete $G$-metric spaces $(X,G)$ and prove some fixed point theorems in that setting. Our
results generalized some fixed point theorems in quasi-pseudometric spaces \cite{gaba,gaba1}. Namely, we study the notion of coupled (resp. $n$-tuple) fixed points in the setting of $G$-metric spaces endowed with a partial order.
As we mentioned in \cite{gaba} (and it is the same in the present manuscript), the technique of proof employed differs from the classical one and is more natural in the sense that we do not utilize any contractive conditions. Furthermore, our approach does not differentiate between $G$-metric spaces and $G_b$-metric spaces.

\vspace*{0.2cm}

Throughout this manuscript, $X$ will be a non-empty set and $\preceq$ will denote a preorder on $X$ induced by a certain function $\phi$. Given $n \in \mathbb{N}$ with $n\geq 2$, $X^n$ will denote the product space $X \times X \times \ldots  X$ of $n$ identical copies of $X$. In the next few lines, we recall some concepts and fix our notations. We shall only recall the necessary notions. The interested reader is referred to \cite{gaba,Mustafa} for a more detailed expos\'e. The results presented are a  generalization of a previous work by Gaba\cite{gaba}.

\begin{definition}(Compare \cite{gaba})
	Let $(X,\preceq_{{}_X})$ and $(Y,\preceq_{{}_Y})$ be two prosets. A map $T:X\to Y$ is said to be \textbf{preorder-preserving} or \textbf{isotone} if for any $x,y \in X, $
	\[
	x\preceq_{{}_X} y \Longrightarrow Tx \preceq_{{}_Y} Ty.
	\]
	Similarly, for any family $(X_i,\preceq_{{}_{X_i}}),\ i=1,2,\cdots,n; \ (Y,\preceq_{{}_Y})$ of posets, a mapping $F:X_1\times X_2\times \cdots\times X_n \to Y$ is said to be \textbf{preorder-preserving} or \textbf{isotone} if for any 
	for any  $(x_1,x_2,\cdots,x_n), (z_1,z_2,\cdots,z_n) \in X_1\times X_2\times \cdots\times X_n, $
	\[
	\quad x_i\preceq_{{}_{X_i}} z_i \text{ for all } i=1,2,\cdots,n \Longrightarrow F(x_1,x_2,\cdots,x_n) \preceq_{{}_{Y}} F(z_1,z_2,\cdots,z_n).
	\]
\end{definition}

Next we recall the basic concepts and notations attached to the idea of $G$-metric. This can be read extensively in \cite{Mustafa}.

\begin{definition}\label{def1} (See \cite[Definition 3]{Mustafa})
	Let $X$ be a nonempty set, and let the function $G:X\times X\times X \to [0,\infty)$ satisfy the following properties:
	\begin{itemize}
		\item[(G1)] $G(x,y,z)=0$ if $x=y=z$ whenever $x,y,z\in X$;
		\item[(G2)] $G(x,x,y)>0$ whenever $x,y\in X$ with $x\neq y$;
		\item[(G3)] $G(x,x,y)\leq G(x,y,z) $ whenever $x,y,z\in X$ with $z\neq y$;
		\item[(G4)] $G(x,y,z)= G(x,z,y)=G(y,z,x)=\ldots$, (symmetry in all three variables);
		
		\item[(G5)]
		$$G(x,y,z) \leq [G(x,a,a)+G(a,y,z)]$$ for any points $x,y,z,a\in X$.
	\end{itemize}
	Then $(X,G)$ is called a \textbf{$G$-metric space}.
	
\end{definition}

\begin{definition}(See \cite{Mustafa})
	Let $(X, G)$ be a $G$-metric space, and let $\{x_n \}$ be a
	sequence of points of $X$, therefore, we say that $(x_n )$ is $G$-convergent to
	$x \in X$ if $\lim_{n,m\to \infty} G (x, x_n , x_m ) = 0,$ that is, for any $\varepsilon > 0$, there exists $N \in \mathbb{N}$ such that $G (x, x_n , x_ m ) < \varepsilon$, for all, $n, m \geq N$. We call $x$ the limit of the sequence and write $x_n \to x$ or $\lim_{n\to \infty} x_n = x$.
	
\end{definition}

\vspace*{0.2cm}

\begin{proposition}\label{prop1} (Compare \cite[Proposition 6]{Mustafa})
	Let $(X,G)$ be a $G$-metric space. Define on $X$ the metric  $d_G$ by $d_G(x,y)= G(x,y,y)+G(x,x,y)$ whenever $x,y \in X$. Then for a sequence $(x_n) \subseteq X$, the following are equivalent
	\begin{itemize}
		\item[(i)] $(x_n)$ is $G$-convergent to $x\in X.$
		
		\item[(ii)] $\lim_{n,m \to \infty}G(x,x_n,x_m)=0.$

		\item[(iii)]  $\lim_{n \to \infty}d_G(x_n,x)=0$.

		\item[(iv)]$\lim_{n \to \infty}G(x,x_n,x_n)=0.$ 
		
		\item[(v)]$\lim_{n \to \infty}G(x_n,x,x)=0.$ 
	\end{itemize}
	
\end{proposition}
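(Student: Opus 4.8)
\emph{Proof idea.} The equivalence (i)$\Leftrightarrow$(ii) is immediate, since (ii) is exactly the definition of $(x_n)$ being $G$-convergent to $x$; so the plan is to handle the remaining equivalences by a short cycle of implications, after extracting two elementary consequences of the axioms. The only point requiring genuine work is passing from a one-index condition such as (iv) or (v) back to the two-index condition (ii); the rest reduces to the definition, the symmetry axiom (G4), and bookkeeping.

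First I would record the preliminary facts. From (G4) one has $G(a,a,b)=G(a,b,a)=G(b,a,a)$ and $G(a,b,b)=G(b,a,b)=G(b,b,a)$ for all $a,b\in X$; in particular the quantity in (iv) is $G(x,x_n,x_n)$ while the one in (v) is $G(x_n,x,x)=G(x,x,x_n)$, and these are a priori distinct. Applying (G5) to the triple $(a,a,b)$ with intermediate point $b$, and using (G4), gives $G(a,a,b)\le 2\,G(a,b,b)$; applying (G5) to $(b,b,a)$ with intermediate point $a$ gives $G(a,b,b)\le 2\,G(a,a,b)$. Hence $\tfrac{1}{2}\,G(a,a,b)\le G(a,b,b)\le 2\,G(a,a,b)$ for all $a,b\in X$, so each of $G(a,a,b)$ and $G(a,b,b)$ is comparable to $d_G(a,b)=G(a,b,b)+G(a,a,b)$.

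With these in hand, the easy links are: (ii)$\Rightarrow$(iv) by putting $m=n$; (iv)$\Leftrightarrow$(v) from the comparability $\tfrac{1}{2} G(x,x,x_n)\le G(x,x_n,x_n)\le 2G(x,x,x_n)$ together with $G(x,x,x_n)=G(x_n,x,x)$; and (iii) is equivalent to ``(iv) and (v)'' because, by (G4), $d_G(x_n,x)=G(x_n,x,x)+G(x,x_n,x_n)$ is a sum of the two nonnegative quantities occurring in (v) and (iv), so it tends to $0$ iff both of them do. Thus (iii), (iv), (v) are mutually equivalent, and it remains only to close the cycle, say by proving (iv)$\Rightarrow$(ii).

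For that step, given $\varepsilon>0$ choose $N$ with $G(x,x_k,x_k)<\varepsilon$ for all $k\ge N$. For $n,m\ge N$, (G5) with intermediate point $x_n$ gives $G(x,x_n,x_m)\le G(x,x_n,x_n)+G(x_n,x_n,x_m)$; then $G(x_n,x_n,x_m)=G(x_m,x_n,x_n)\le 2G(x_n,x_m,x_m)$ by the preliminary inequality, $G(x_n,x_m,x_m)\le G(x_n,x,x)+G(x,x_m,x_m)$ by (G5) with intermediate point $x$, and $G(x_n,x,x)=G(x,x,x_n)\le 2G(x,x_n,x_n)$ again by the preliminary inequality; assembling the pieces yields $G(x,x_n,x_m)\le 5\,G(x,x_n,x_n)+2\,G(x,x_m,x_m)<7\varepsilon$, proving (iv)$\Rightarrow$(ii). (Alternatively, once one invokes from \cite[Proposition 6]{Mustafa} that $d_G$ obeys the triangle inequality, (iii)$\Rightarrow$(ii) follows directly from $G(x,x_n,x_m)\le G(x,x_n,x_n)+G(x_n,x_n,x_m)\le d_G(x,x_n)+d_G(x_n,x_m)\le 2\,d_G(x,x_n)+d_G(x,x_m)$.) The main obstacle is precisely this diagonal-to-full passage; once it is in place, all five statements are equivalent.
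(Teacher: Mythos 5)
Your proof is correct. Note that the paper itself offers no proof of this proposition --- it is quoted as a preliminary fact with the citation ``Compare \cite[Proposition 6]{Mustafa}'' --- so the relevant comparison is with the standard argument of Mustafa--Sims, which your write-up essentially reconstructs: the two-sided bound $\tfrac12 G(a,a,b)\le G(a,b,b)\le 2G(a,a,b)$ obtained from (G5) and (G4) is exactly the key lemma used there, and your chain of implications, including the genuinely nontrivial diagonal-to-full step (iv)$\Rightarrow$(ii), is complete and each application of (G5) checks out.
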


\vspace*{0.2cm}

\begin{definition}(See \cite{Mustafa})
	Let $(X, G)$ be a $G$-metric space. A sequence $\{x_n \}$ is
	called a $G$-Cauchy sequence if for any $\varepsilon > 0$, there is $N \in \mathbb{N}$ such that
	$G (x_n , x_m , x_l ) < \varepsilon$ for all $n, m, l \geq N$, that is $G (x_n , x_m , x_l ) \to 0$ as $n, m,l \to +\infty$.
	We shall use interchangeably ``$G$-Cauchy sequence in a $G$-metric space"  or ``Cauchy sequence in a $G$-metric space".
\end{definition}

\vspace*{0.2cm}

\begin{proposition}(Compare \cite[Proposition 9]{Mustafa})
	
	In a $G$-metric space $(X,G)$, the following are equivalent
	\begin{itemize}
		\item[(i)] The sequence $(x_n) \subseteq X$ is $G$-Cauchy.
		
		\item[(ii)] For each $\varepsilon >0$ there exists $N \in \mathbb{N}$ such that $G(x_n,x_m,x_m)< \varepsilon$ for all $m,n\geq N$.
		
	\end{itemize}
	
\end{proposition}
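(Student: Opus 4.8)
The plan is to establish the two implications separately. The implication $(i)\Rightarrow(ii)$ is immediate: if $(x_n)$ is $G$-Cauchy and $\varepsilon>0$ is given, one picks $N$ with $G(x_n,x_m,x_l)<\varepsilon$ for all $n,m,l\geq N$ and simply specializes to $l=m$, which yields $G(x_n,x_m,x_m)<\varepsilon$ for all $n,m\geq N$.

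For the converse $(ii)\Rightarrow(i)$, I would fix $\varepsilon>0$ and, using (ii), choose $N\in\mathbb{N}$ so that $G(x_p,x_q,x_q)<\varepsilon/2$ whenever $p,q\geq N$. Then for arbitrary indices $n,m,l\geq N$ I would apply the rectangle inequality (G5) with the intermediate point $a=x_m$, namely
\[
G(x_n,x_m,x_l)\leq G(x_n,x_m,x_m)+G(x_m,x_m,x_l),
\]
and rewrite the second summand as $G(x_m,x_m,x_l)=G(x_l,x_m,x_m)$ using the full symmetry (G4). Both terms on the right are now exactly of the form controlled by hypothesis (ii), hence each is $<\varepsilon/2$, so $G(x_n,x_m,x_l)<\varepsilon$ for all $n,m,l\geq N$; that is, $(x_n)$ is $G$-Cauchy.

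There is no real obstacle here; the one point to get right is the choice of intermediate point in (G5): taking $a=x_m$ is precisely what makes both of the resulting three-variable terms degenerate into the two-variable quantity that (ii) bounds, after which (G4) completes the reduction. One could alternatively route the argument through the associated metric $d_G$ of Proposition~\ref{prop1}, but the direct estimate via (G4) and (G5) is shorter and self-contained.
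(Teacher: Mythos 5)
Your argument is correct: the forward direction is the trivial specialization $l=m$, and the converse via (G5) with $a=x_m$ followed by (G4) is exactly the standard estimate, which is the argument behind the cited result of Mustafa and Sims; the paper itself states this proposition without proof, so your self-contained verification matches the intended (referenced) reasoning.
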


\vspace*{0.2cm}

\begin{definition} (Compare \cite[Definition 9]{Mustafa})
	A $G$-metric space $(X,G)$ is $G$-complete if every $G$-Cauchy sequence of elements of $(X,G)$ is $G$-convergent in  $(X,G)$. We shall use interchangeably ``$G$-complete $G$-metric space" or ``complete $G$-metric space".
	
\end{definition}

\begin{theorem}\label{theorem1}(See \cite{Mustafa})
	A $G$-metric $G$ on a $G$-metric space $(X, G)$ is continuous on its three
	variables.
\end{theorem}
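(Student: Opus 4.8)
The plan is to establish \emph{sequential} continuity of $G:X^3\to[0,\infty)$ and then invoke metrizability (Proposition \ref{prop1}) to upgrade it to continuity, since the product space $X^3$ is metrizable whenever $X$ is and sequential continuity suffices on metrizable domains. So let $(x_n)\to x$, $(y_n)\to y$, $(z_n)\to z$ in $(X,G)$; I must show $G(x_n,y_n,z_n)\to G(x,y,z)$.

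The key is a ``three-fold rectangle'' estimate obtained by chaining (G5) and (G4). First I would apply (G5) to peel off one coordinate at a time: by (G5), $G(x_n,y_n,z_n)\le G(x_n,x,x)+G(x,y_n,z_n)$; rewriting $G(x,y_n,z_n)=G(y_n,x,z_n)$ by the symmetry (G4) and applying (G5) again gives $G(y_n,x,z_n)\le G(y_n,y,y)+G(y,x,z_n)$; once more, $G(y,x,z_n)=G(z_n,x,y)\le G(z_n,z,z)+G(z,x,y)=G(z_n,z,z)+G(x,y,z)$. Concatenating,
\[
G(x_n,y_n,z_n)\le G(x_n,x,x)+G(y_n,y,y)+G(z_n,z,z)+G(x,y,z).
\]
Running the same argument with the roles of $(x_n,y_n,z_n)$ and $(x,y,z)$ interchanged yields
\[
G(x,y,z)\le G(x,x_n,x_n)+G(y,y_n,y_n)+G(z,z_n,z_n)+G(x_n,y_n,z_n),
\]
and hence
\[
\bigl|G(x_n,y_n,z_n)-G(x,y,z)\bigr|\le \sum_{w\in\{x,y,z\}}\bigl(G(w_n,w,w)+G(w,w_n,w_n)\bigr),
\]
with the obvious convention $w_n\in\{x_n,y_n,z_n\}$.

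Finally, since $(x_n)\to x$ (and likewise for $y,z$), Proposition \ref{prop1}, parts (iv) and (v), tells us that $G(x_n,x,x)\to 0$ and $G(x,x_n,x_n)\to 0$, and similarly for the $y$- and $z$-terms; so the right-hand side above tends to $0$, giving $G(x_n,y_n,z_n)\to G(x,y,z)$. This proves sequential, hence full, continuity of $G$.

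\textbf{Main obstacle.} The only delicate point is the repeated bookkeeping with the symmetry axiom (G4): each application of (G5) splits off the \emph{first} slot of the triple, so one must permute coordinates correctly before each step to make sure the coordinate being removed is $x$ (resp.\ $y$, $z$) and that the ``leftover'' triple is exactly $G(x,y,z)$. Getting both directions of the inequality, and thus the absolute-value bound, is where a careless index can slip; everything else is a routine limit argument using Proposition \ref{prop1}.
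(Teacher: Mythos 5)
Your proof is correct, and it is essentially the standard argument: the paper itself gives no proof of this theorem (it is quoted directly from the cited reference of Mustafa and Sims), and the chained (G5)/(G4) estimate yielding $\bigl|G(x_n,y_n,z_n)-G(x,y,z)\bigr|\le \sum_{w}\bigl(G(w_n,w,w)+G(w,w_n,w_n)\bigr)$, combined with Proposition \ref{prop1}(iv)--(v) and metrizability via $d_G$, is exactly how it is established there.
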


\begin{definition}
	Let $(X,G)$ be a $G$-metric space. A function $T:X\to X$ is called \textbf{sequentially continuous} if for any $G$-convergent sequence $(x_n)$ with $x_n\longrightarrow x $, the sequence $(Tx_n)$ $G$-converges to $Tx$, i.e. $Tx_n \longrightarrow Tx $.
	
	Similarly, a function $T:X_1\times X_2\times \cdots \times X_n \to X$ for $n\geq 2,$ is said to be \textbf{sequentially continuous} if for any sequences $(x_l^i)$ such that $x_l^i \longrightarrow x^{*,i} ,$ then $$T(x_l^i,x_l^{i+1},\cdots,x_l^n,x_l^i,\cdots,x_l^{i-1})\longrightarrow T(x^{*,i},x^{*,i+1},\cdots,x^{*,n},x^{*,1},\cdots,x^{*,i-1}).$$
	
	\end{definition}

\begin{definition}(Compare \cite{er})
	An element $(x^1,x^2,\cdots,x^n) \in X^n$ is called:
	\begin{enumerate}
		\item[(E1)] a \textbf{$n$-tuple fixed point} of the mapping $F:X^n \to X$ if
		$$F(x^i,x^{i+1},\cdots,x^n, x^1,\cdots,x^{i-1})=x^i, \text{ for all } i, 1\leq i\leq n.  $$
		
		\item[(E2)] a \textbf{$n$-tuple coincidence point} of the mappings $F:X^n \to X$ and $T:X \to X$ if $F(x^i,x^{i+1},\cdots,x^n, x^1,\cdots,x^{i-1})=Tx^i$ for all $i, 1\leq i\leq n$ and in this case $(Tx^1,Tx^2,\cdots,Tx^n)$ is called the \textbf{$n$-tuple point of coincidence};
		\item[(E3)] a \textbf{common $n$-tuple fixed point} of the mappings $F:X^n \to X$ and $T:X \to X$ if $F(x^i,x^{i+1},\cdots,x^n, x^1,\cdots,x^{i-1})=Tx^i=x^i$ for all $i, 1\leq i\leq n$.
	\end{enumerate}
\end{definition}

\begin{definition}(Compare \cite{gaba})
	
	An element $(x^1,x^2,\cdots,x^n) \in X^n$ is called
	
	\begin{enumerate}
		\item[(D1)] a \textbf{$n$-tuple coincidence point} of the mappings $F:X^n \to X$ and $T,R:X \to X$ if $F(x^i,x^{i+1},\cdots,x^n, x^1,\cdots,x^{i-1})=Tx^i=Rx^i$ for all $i, 1\leq i\leq n$;
		
		\item[(D2)] a \textbf{common $n$-tuple fixed point} of the mappings $F:X^n \to X$ and $T,R:X \to X$ if $F(x^i,x^{i+1},\cdots,x^n, x^1,\cdots,x^{i-1})=Tx^i=Rx^i=x^i$ for all $i, 1\leq i\leq n$.
	\end{enumerate}
	
\end{definition}

\section{$n$-tuple fixed point}
We first prove the following lemma:

\begin{lemma}(Compare \cite[Lemma 3.1.]{gaba})
	Let $(X,G)$ be a $G$-metric space and $\phi : X \to \mathbb{R}$ a map. Define the binary relation $"\preceq"$ on $X$ as follows:
	\[
	x \preceq y \Longleftrightarrow G(x,y,y) \leq \phi(y)-\phi(x).
	\]
	Then $"\preceq"$ is a preorder on $X$. It will be called the preorder induced by $\phi$.
\end{lemma}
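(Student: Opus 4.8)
The plan is to verify the two defining axioms of a preorder, namely reflexivity and transitivity, directly from the definition $x \preceq y \Longleftrightarrow G(x,y,y) \leq \phi(y)-\phi(x)$.

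First I would check reflexivity. For any $x \in X$, we have $G(x,x,x) = 0$ by axiom (G1), while $\phi(x) - \phi(x) = 0$; hence $G(x,x,x) \leq \phi(x) - \phi(x)$, so $x \preceq x$. This step is immediate.

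Next I would check transitivity, which is the only step requiring any real argument (though it is still short). Suppose $x \preceq y$ and $y \preceq z$, i.e. $G(x,y,y) \leq \phi(y)-\phi(x)$ and $G(y,z,z) \leq \phi(z)-\phi(y)$. I want to conclude $G(x,z,z) \leq \phi(z)-\phi(x)$. The key tool is the rectangle inequality (G5), which gives $G(x,z,z) \leq G(x,y,y) + G(y,z,z)$; here one uses (G5) with the point $a = y$ to split $G(x,z,z)$, possibly together with the symmetry axiom (G4) to arrange the variables appropriately. Adding the two hypothesised inequalities yields $G(x,y,y) + G(y,z,z) \leq (\phi(y)-\phi(x)) + (\phi(z)-\phi(y)) = \phi(z) - \phi(x)$, and chaining the two estimates gives $G(x,z,z) \leq \phi(z)-\phi(x)$, i.e. $x \preceq z$. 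The main (mild) obstacle is simply making sure the instance of (G5) is applied in the correct slots; since (G5) as stated reads $G(x,y,z) \leq G(x,a,a) + G(a,y,z)$, taking $z$ in place of $y$ and again in place of $z$, and $a=y$, produces exactly $G(x,z,z) \leq G(x,y,y) + G(y,z,z)$ after using (G4) if needed.

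Note that antisymmetry is deliberately not claimed: if $\phi$ is not injective one may have $x \preceq y$ and $y \preceq x$ with $G(x,y,y)=G(y,x,x)=0$ forcing, via (G2), $x = y$ only when the $G$-metric separates points in that way — in fact here $G(x,y,y)\le \phi(y)-\phi(x)$ and $G(y,x,x)\le\phi(x)-\phi(y)$ do force $G(x,y,y)=G(y,x,x)=0$ and hence $x=y$ by (G2), so the relation is actually a partial order; but since the statement only asserts ``preorder'' I would simply record reflexivity and transitivity and remark that antisymmetry in fact also holds, leaving the reader to note it. This keeps the proof to essentially three lines.
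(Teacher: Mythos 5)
Your proof is correct and follows essentially the same route as the paper: reflexivity from (G1) and transitivity by applying (G5) with $a=y$ to get $G(x,z,z)\le G(x,y,y)+G(y,z,z)$ and then summing the two hypothesised inequalities. Your closing observation that the relation is in fact antisymmetric (since $x\preceq y$ and $y\preceq x$ force $G(x,y,y)=G(y,x,x)=0$, whence $x=y$ by (G2) and (G4)) is a valid bonus not present in the paper, which only claims a preorder.
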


\begin{proof} \hspace*{0.2cm}
	
	\begin{itemize}
		\item Reflexivity: For all $x \in X$; 
		$$0=G(x,x,x) \leq \phi(x)-\phi(x)=0,$$
		hence $x\preceq x$, i.e., "$\preceq$" is reflexive.
		
		\item Transitivity: For $x, y, z \in X$ s.t. $ x\preceq y$ and $y\preceq z$, we
		have
		\[  G(x,y,y) \leq \phi(y)-\phi(x) \text{ and }  G(y,z,z) \leq \phi(z)-\phi(y).\]
		By property (G5), we have
		\[G(x,z,z) \leq G(x,y,y)+G(y,z,z)\leq \phi(y)-\phi(x) + \phi(z)-\phi(y) = \phi(z)-\phi(x),  \]
		i.e. $x\preceq z.$ Thus, "$\preceq$" is transitive, and so the
		relation "$\preceq$" is a preorder on $X$.
	\end{itemize}
\end{proof}

\begin{example}
	Let $X = [0, \infty)$ and $G(x, y,z) = \max\{x,y,z\}$, then $(X, G)$ is a
	$G$-metric space. Let $\phi : X \to \mathbb{R},\ \phi(x) = 2x.$ Then for $x, y \in X$
	
	\begin{align*}
	x\preceq_\phi y:=	x\preceq y & \Longleftrightarrow G(x,y,y) \leq \phi(y) - \phi(x) \\
		         & \Longleftrightarrow \max\{x,y\} \leq 2y - 2x. 
	\end{align*}
	It follows that $$2 \preceq 4, \ \ \ \  \frac{1}{4} \preceq \frac{1}{2}, \ \  \ \  2 \preceq 2,$$ whereas $3$ is not comparable
	to $2$ and $6$ is not comparable to $5$, etc. Therefore $X$ is a $\phi$-ordered $G$-metric space. 
\end{example}

\begin{remark}
Note that $(X,\preceq_\phi)$ is just a preordered space in general. However, if the $G$-metric $G$ is symmetric, i.e. $G(x,y,y)=G(y,x,x)$, then $(X,\preceq_\phi)$ is an ordered space.
\end{remark}

Now we prove the following theorem.

\begin{theorem}
	Let $(X, G)$ be a complete $G$-metric space, $\phi : X \rightarrow \mathbb{R}$ be a bounded from above function and ``$\preceq$" the preorder induced by $\phi$. Let $F : X^n \rightarrow X,$ $n\geq 2$ be a preorder preserving and sequentilally continuous mapping on $X^n$ such that there exist $n$ elements $x_0^1,\cdots,x_0^n \in X$ with 
	
	\[
	x_0^i \preceq F(x_0^i,x_0^{i+1},\cdots,x_0^n,x_0^1,\cdots,x_0^{i-1})\  \text{  for all } i, \ 1\leq i \leq n.
	\]
	Then $F$ has a $n$-tuple fixed point in $X^n$.
	
\end{theorem}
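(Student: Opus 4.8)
The plan is to run a Picard-type iteration and exploit the monotonicity of $\phi$ rather than any contractive estimate, which is exactly the ``more natural'' technique advertised in the introduction. Starting from the given initial point $(x_0^1,\cdots,x_0^n)$, I would define for each $i\in\{1,\cdots,n\}$ and each $m\geq 0$
$$x_{m+1}^i := F(x_m^i,x_m^{i+1},\cdots,x_m^n,x_m^1,\cdots,x_m^{i-1}).$$
The first step is to prove, by induction on $m$ and for all $i$ simultaneously, that $x_m^i \preceq x_{m+1}^i$. The base case $m=0$ is precisely the hypothesis. For the inductive step, if $x_m^j \preceq x_{m+1}^j$ for every $j$, then the two $n$-tuples to which $F$ is applied in order to produce $x_{m+1}^i$ and $x_{m+2}^i$ are coordinatewise comparable (they consist of the same family of inequalities, merely cyclically shifted), so isotonicity of $F$ gives $x_{m+1}^i = F(x_m^i,\cdots,x_m^{i-1}) \preceq F(x_{m+1}^i,\cdots,x_{m+1}^{i-1}) = x_{m+2}^i$.

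Unravelling the definition of $\preceq$, the relation $x_m^i \preceq x_{m+1}^i$ means $G(x_m^i,x_{m+1}^i,x_{m+1}^i) \leq \phi(x_{m+1}^i)-\phi(x_m^i)$; in particular the real sequence $(\phi(x_m^i))_m$ is non-decreasing, and since $\phi$ is bounded from above it converges, hence is Cauchy in $\mathbb{R}$. Next I would show that each $(x_m^i)_m$ is a $G$-Cauchy sequence: for $m\leq k$, transitivity of $\preceq$ (argued exactly as in the proof of the preceding lemma, chaining the intermediate inequalities via (G5)) yields $x_m^i \preceq x_k^i$, hence $G(x_m^i,x_k^i,x_k^i) \leq \phi(x_k^i)-\phi(x_m^i)$; invoking the standard $G$-metric inequality $G(a,b,b)\leq 2G(a,a,b)$ one also bounds $G(x_k^i,x_m^i,x_m^i) \leq 2\bigl(\phi(x_k^i)-\phi(x_m^i)\bigr)$. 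Thus all $G$-distances among the tail of $(x_m^i)$ are controlled by differences of $\phi$-values, which are arbitrarily small for large indices. By the proposition characterising $G$-Cauchy sequences, $(x_m^i)$ is $G$-Cauchy, and by $G$-completeness it $G$-converges to some $x^{*,i}\in X$.

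Finally, applying sequential continuity of $F$ to the convergent sequences $x_m^i \longrightarrow x^{*,i}$ gives
$$x_{m+1}^i = F(x_m^i,x_m^{i+1},\cdots,x_m^n,x_m^1,\cdots,x_m^{i-1}) \longrightarrow F(x^{*,i},x^{*,i+1},\cdots,x^{*,n},x^{*,1},\cdots,x^{*,i-1}),$$
while simultaneously $x_{m+1}^i \longrightarrow x^{*,i}$. Since limits in a $G$-metric space are unique (if $G(x,z_m,z_m)\to 0$ and $G(y,z_m,z_m)\to 0$, then (G5) gives $G(x,y,y)\leq G(x,z_m,z_m)+G(z_m,y,y)\to 0$, so $G(x,y,y)=0$ and (G1)--(G2) force $x=y$), it follows that $F(x^{*,i},x^{*,i+1},\cdots,x^{*,n},x^{*,1},\cdots,x^{*,i-1}) = x^{*,i}$ for every $i$, i.e. $(x^{*,1},\cdots,x^{*,n})$ is an $n$-tuple fixed point of $F$.

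I expect the only genuinely delicate step to be the $G$-Cauchy argument: the preorder directly bounds only $G(x_m^i,x_k^i,x_k^i)$ for $m\leq k$, so the asymmetry of the two-variable reductions of a $G$-metric must be absorbed by an auxiliary inequality such as $G(a,b,b)\leq 2G(a,a,b)$ in order to also dominate $G(x_k^i,x_m^i,x_m^i)$. Everything else — the monotone induction, the convergence of $(\phi(x_m^i))_m$, and the passage to the limit — is routine bookkeeping with the cyclic indexing.
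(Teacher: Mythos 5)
Your proposal follows essentially the same route as the paper's own proof: the same Picard-type iteration, the same monotone induction using isotonicity of $F$, the same observation that $(\phi(x_m^i))_m$ is non-decreasing and bounded above hence Cauchy in $\mathbb{R}$, and the same passage to the limit via sequential continuity. Your additional use of $G(a,b,b)\leq 2G(a,a,b)$ to control $G(x_k^i,x_m^i,x_m^i)$ as well as $G(x_m^i,x_k^i,x_k^i)$ is a small refinement of a point the paper leaves implicit when invoking the Cauchy criterion, but it does not alter the argument.
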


\begin{proof}
	Let $x_0^1,\cdots, x_0^n\in X$ with 
	\[
	x_0^i \preceq F(x_0^i,x_0^{i+1},\cdots,x_0^n,x_0^1,\cdots,x_0^{i-1})\ \ \text{  for all } i, \ 1\leq i \leq n.
	\]
	
	We construct the sequences $(x_l^i)_l$ for $1\leq i \leq n$ as follows:
	
	\begin{equation}\label{eq2}
	x_{l+1}^i = F(x_l^i,x_l^{i+1},\cdots,x_l^n,x_l^1,\cdots,x_l^{i-1}), \ \  \text{  for all } i, \ 1\leq i \leq n.
	\end{equation}
	We shall show that 
	\begin{equation}\label{eq3}
	x_l^i \preceq x_{l+1}^i \ \ \  \text{  for all } l\geq 0.
	\end{equation}
	
	We use mathematical induction. Since $x_0^i \preceq F(x_0^i,x_0^{i+1},\cdots,x_0^n,x_0^1,\cdots,x_0^{i-1})$, we have $x_0^i \preceq x_1^i$. Thus \eqref{eq3} holds for $l=0$. Suppose that \eqref{eq3} holds for some $k>0$. Then since $x^i_k \preceq x^i_{k+1}$ and $F$ is preorder preserving, we have

	\begin{align}
	x^i_{k+1}& = F(x_k^i,x_k^{i+1},\cdots,x_k^n,x_k^1,\cdots,x_k^{i-1}) \\
	& \preceq   F(x_{k+1}^i,x_{k+1}^{i+1},\cdots,x_{k+1}^n,x_{k+1}^1,\cdots,x_{k+1}^{i-1})        \\
	&=  x^i_{k+2}
	\end{align}
	
	Thus by mathematical induction we conclude that \eqref{eq3} holds for all $l \geq 0$. Therefore 
	\[
	x_0^i \preceq x_1^i \preceq x_2^i \preceq x_3^i \preceq \cdots \preceq x_l^i \preceq \cdots .
	\]
	
	By definition of the preorder, we have
	
	\[
	\phi(x_0^i) \leq \phi(x_1^i) \leq \phi(x_2^i) \leq \phi(x_3^i) \leq \cdots \leq \phi(x_l^i) \leq \cdots .
	\]
	
	Hence, the sequence $(\phi(x^i_l ))$ is a non-decreasing sequence of real numbers. Since $\phi$ is bounded from above, the sequence $(\phi(x^i_l ))$    converges and is therefore Cauchy. This entails that for any $\epsilon >0$, there exists $n_0 \in \mathbb{N}$ such that for any $q>p>n_0 $ , we have $\phi(x^i_q ) - \phi(x^i_p ) < \epsilon$. Since whenever $q>p>n_0,$  $x^i_p  \preceq x^i_q $ it follows that
	
	%$x_n \overset{d}{\longrightarrow} x$
	
	\[
	G(x^i_p,x^i_q,x^i_q) \leq \phi(x^i_q ) - \phi(x^i_p )  < \epsilon.
	\]
	
	%\newpage
	
	We conclude that $(x^i_l )$ is a $G$-Cauchy in the complete space $(X,G)$, hence there exists $x^{*,i} \in X$ such that $ x^i_l \longrightarrow x^{*,i}$. Since $F$ is sequentially continuous, we have
	
	\begin{align*}
	x^i_{l+1} \longrightarrow x^{*,i} & \Longleftrightarrow x^i_{l+1}=F(x_l^i,x_l^{i+1},\cdots,x_l^n,x_l^1,\cdots,x_l^{i-1})\longrightarrow x^{*,i} \\
	& \Longleftrightarrow F(x^{*,i},x^{*,i+1},\cdots,x^{*,n},x^{*,1},\cdots,x^{*,i-1})= x^{*,i}.
	\end{align*}
	
	Thus we have proved that $(x^{*,1},\cdots,x^{*,n})$ is a $n$-tuple fixed point of $F$.
	
\end{proof}

\begin{example}
	
	We take $n=3.$
	Let $X = [0, \infty)$ and $G(x, y,z) = \max\{|x - y|,|x-z|,|y-z|\}$, then $(X, G)$ is
	a complete $G$-metric space and "$\leq$" is the ordering induced by $\phi(x)=2x$. Let $F : X \times X \to X$ be defined as follows:
	$$F (x, y,z) = x(1 + y)(2+z)$$
	and $F$ is obviously a non-decreasing function on $X$.
	
	If we let $x_0 = 1$ and $y_0 =z_0 =0$ then $$F (x_0 , y_0,z_0 ) = 1 · (1 + 0)(2+0) = 2, \
	F (y_0 ,z_0, x_0 ) = 0 · (1 + 0)(2+1) = 0$$
	 and $$F (z_0 ,x_0, y_0 ) = 0 · (1 + 1)(2+0)=0.$$
	
So we see that $$x_0 \leq F (x_0 , y_0,z_0 ), \ y_0 \leq F (y_0 ,z_0, x_0 ) \ \text{ and }\  z_0\leq F (z_0 ,x_0, y_0 ).$$ 

Also $$F (0, y,z) =
0 · (1 + y)(2+z) = 0, \ F (0,z,x) = 0 · (1 + z)(2+x) = 0,$$   and  $$F(0,x,y)=0 .(1+x)(2+y)=0.$$
Hence $(0, 0,0)$ is a $3$-tuple
fixed point of F.	
\end{example}

\section{Common $n$-tuple fixed point}

Now we define the concept of \textit{weakly related} mappings on preordered spaces as follows:

\begin{definition}(See \cite{gaba})
	Let $(X,\preceq)$ be a preordered space, and $F:X^n\to X$ and $g:X\to X$ be two mappings. Then the pair $\{F,g\}$ is said to be \textbf{weakly related} if 
	the following condition is satisfied:
	\begin{enumerate}
		
		\item[(C1)] 
		$$F(x^i,x^{i+1},\cdots,x^n,x^1,\cdots,x^{i-1})\preceq gF(x^i,x^{i+1},\cdots,x^n,x^1,\cdots,x^{i-1}) $$ and $$gx^i \preceq F(gx^i,gx^{i+1},\cdots,gx^n,gx^1,\cdots,gx^{i-1})$$ for all $1\leq i \leq n$.
		
		%\newpage

	\end{enumerate}
	
\end{definition}
Now we state and prove the first common $n$-tuple fixed point existence theorem for the weakly related mappings.

\begin{theorem}\label{theorem2}
	Let $(X, G)$ be a complete $G$-metric space, $\phi : X \rightarrow \mathbb{R}$ be a bounded from above function and ``$\preceq$" the preorder induced by $\phi$. Let $F : X^n \rightarrow X, n\geq 2$ and $g:X\to X$ be two sequentilally continuous mapping on $X$ such that the pair $\{F,g\}$ is weakly related. If there exist $n$ elements $x_0^1,\cdots,x_0^n \in X$ with 
	
	\[
	x_0^i \preceq F(x_0^i,x_0^{i+1},\cdots,x_0^n,x_0^1,\cdots,x_0^{i-1})\  \text{  for all } i, \ 1\leq i \leq n.
	\]
	Then $F$ and $g$ have a common $n$-tuple fixed point in $X^n$.
	
\end{theorem}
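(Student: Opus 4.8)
The plan is to follow the scheme of the preceding theorem, but to accommodate the second map $g$ — and the fact that $F$ is no longer assumed preorder preserving — by iterating $F$ and $g$ \emph{alternately}, so that the two halves of the weakly related condition (C1) feed into one another and produce a single $\preceq$-increasing chain.

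Concretely, fix $x_0^1,\dots,x_0^n\in X$ as in the hypothesis and define, for each $i\in\{1,\dots,n\}$ and each $k\ge 0$,
$$x_{2k+1}^i=F(x_{2k}^i,x_{2k}^{i+1},\cdots,x_{2k}^n,x_{2k}^1,\cdots,x_{2k}^{i-1}),\qquad x_{2k+2}^i=g\,x_{2k+1}^i.$$
I would then show by induction on $l$ that $x_l^i\preceq x_{l+1}^i$ for all $l\ge 0$ and all $i$. The case $l=0$ is exactly the hypothesis $x_0^i\preceq F(x_0^i,\dots,x_0^{i-1})=x_1^i$. If $l=2k+1$ is odd, then, applying the first inequality in (C1) to the tuple $(x_{2k}^1,\dots,x_{2k}^n)$,
$$x_{2k+1}^i=F(x_{2k}^i,\cdots,x_{2k}^{i-1})\preceq gF(x_{2k}^i,\cdots,x_{2k}^{i-1})=g\,x_{2k+1}^i=x_{2k+2}^i.$$
If $l=2k$ is even with $k\ge 1$, then $x_{2k}^i=g\,x_{2k-1}^i$ and, applying the second inequality in (C1) to the tuple $(x_{2k-1}^1,\dots,x_{2k-1}^n)$,
$$x_{2k}^i=g\,x_{2k-1}^i\preceq F(g\,x_{2k-1}^i,\cdots,g\,x_{2k-1}^{i-1})=F(x_{2k}^i,\cdots,x_{2k}^{i-1})=x_{2k+1}^i.$$
Hence $x_0^i\preceq x_1^i\preceq x_2^i\preceq\cdots$ for every $i$; note this uses only (C1), not any monotonicity of $F$.

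From here the argument is identical to that of the preceding theorem. By the definition of $\preceq$, $(\phi(x_l^i))_l$ is non-decreasing, and since $\phi$ is bounded above it converges, hence is Cauchy; then for $q>p$ large, $x_p^i\preceq x_q^i$ gives $G(x_p^i,x_q^i,x_q^i)\le\phi(x_q^i)-\phi(x_p^i)$ arbitrarily small, so $(x_l^i)_l$ is $G$-Cauchy and, by completeness, converges to some $x^{*,i}\in X$. Passing to the limit along the odd subsequence $x_{2k+1}^i=F(x_{2k}^i,\cdots,x_{2k}^{i-1})$, sequential continuity of $F$ together with uniqueness of $G$-limits (via Proposition~\ref{prop1}) yields $F(x^{*,i},x^{*,i+1},\cdots,x^{*,i-1})=x^{*,i}$; passing to the limit in $x_{2k+2}^i=g\,x_{2k+1}^i$, sequential continuity of $g$ yields $g\,x^{*,i}=x^{*,i}$. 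Thus $F(x^{*,i},\dots,x^{*,i-1})=g\,x^{*,i}=x^{*,i}$ for all $i$, i.e. $(x^{*,1},\dots,x^{*,n})$ is a common $n$-tuple fixed point of $F$ and $g$.

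The one point requiring care is the very choice of iteration. A direct scheme $x_{l+1}^i=F(x_l^i,\dots)$ would exploit only the first half of (C1) and, to propagate the ordering, would require $F$ to be preorder preserving, which is not assumed here. Interleaving the $g$-step is precisely what lets the second half $g x^i\preceq F(g x^i,\dots)$ enter the induction, and it is also what makes $g$-sequential-continuity usable to conclude $g\,x^{*,i}=x^{*,i}$ at the end. Everything else reduces to the estimates already carried out in the previous proof.
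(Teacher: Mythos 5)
Your proposal is correct and follows essentially the same route as the paper: the identical alternating scheme $x_{2k+1}^i=F(x_{2k}^i,\dots,x_{2k}^{i-1})$, $x_{2k+2}^i=g\,x_{2k+1}^i$, the same use of the two halves of (C1) to build a $\preceq$-increasing chain, the same $\phi$-boundedness argument for $G$-Cauchyness, and the same passage to the limit via sequential continuity. If anything, your induction step is written out more completely than the paper's (which only displays the first few links before invoking induction), and your closing remark on why the interleaving replaces the preorder-preservation hypothesis is a useful clarification, not a deviation.
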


\begin{proof}

	Let $x_0^1,\cdots, x_0^n\in X$  
	\begin{equation}\label{eq7}
	x_0^i \preceq F(x_0^i,x_0^{i+1},\cdots,x_0^n,x_0^1,\cdots,x_0^{i-1})\ \ \text{  for all } i, \ 1\leq i \leq n.
	\end{equation}
	
	We construct the sequences $(x_l^i)_l$ for $1\leq i \leq n$ as follows:
	
	\begin{equation}
	x_{2l+1}^i = F(x_{2l}^i,x_{2l}^{i+1},\cdots,x_{2l}^n,x_{2l}^1,\cdots,x_{2l}^{i-1})
	\end{equation}
	and
	$$  x_{2l+2}^i  = gx_{2l+1}^i ,$$
	for all $l\geq 0.$
	%\end{pf}
	
	We shall show that
	\begin{equation}\label{eq9}
	x_l^i \preceq x_{l+1}^i \ \ \  \text{  for all } l\geq 0.
	\end{equation} 
	
	Since $x_0^i \preceq F(x_0^i,x_0^{i+1},\cdots,x_0^n,x_0^1,\cdots,x_0^{i-1}) $, using \eqref{eq7}, we have $x_0^i \preceq x_1^i.$ Again since the pair $\{F,g\}$ is weakly related, we have
	
	\begin{align*}
	x_1^i = F(x_0^i,x_0^{i+1},\cdots,x_0^n,x_0^1,\cdots,x_0^{i-1}) &\preceq gF(x_0^i,x_0^{i+1},\cdots,x_0^n,x_0^1,\cdots,x_0^{i-1})\\
	&= gx^i_1 =x^i_2,
	\end{align*}
	i.e $$x_1^i \preceq x^i_2 .$$
	
	Also, since $gx_1^i\preceq F(gx_1^i,gx_1^{i+1},\cdots,gx_1^n,gx_1^1,\cdots,gx_1^{i-1}),$ we have

	\begin{align*}
	x^i_2 = gx^i_1 &\preceq F(gx_1^i,gx_1^{i+1},\cdots,gx_1^n,Gx_1^1,\cdots,gx_1^{i-1}) \\
	&=  F(x_2^i,x_2^{i+1},\cdots,x_2^n,x_2^1,\cdots,x_2^{i-1})  = x_3^i,
	\end{align*}
	
	i.e $$x_2^i \preceq x^i_3 .$$

Thus by mathematical induction we conclude that \eqref{eq9} holds for all $l \geq 0$. Therefore 
\[
x_0^i \preceq x_1^i \preceq x_2^i \preceq x_3^i \preceq \cdots \preceq x_l^i \preceq \cdots .
\]
	
	By definition of the preorder, we have
	
	\[
	\phi(x_0^i) \leq \phi(x_1^i) \leq \phi(x_2^i) \leq \phi(x_3^i) \leq \cdots \leq \phi(x_l^i) \leq \cdots .
	\]
	
	Hence, the sequence $(\phi(x^i_l ))$ is a non-decreasing sequence of real numbers. Since $\phi$ is bounded from above, the sequence $(\phi(x^i_l ))$    converges and is therefore Cauchy. This entails that for any $\epsilon >0$, there exists $n_0 \in \mathbb{N}$ such that for any $q>p>n_0 $ , we have $\phi(x^i_q ) - \phi(x^i_p ) < \epsilon$. Since whenever $q>p>n_0,$  $x^i_p  \preceq x^i_q $ it follows that
	
	%$x_n \overset{d}{\longrightarrow} x$
	
	\[
	G(x^i_p,x^i_q,x^i_q) \leq \phi(x^i_q ) - \phi(x^i_p )  < \epsilon.
	\]

	We conclude that $(x^i_l )$ is a $G$-Cauchy sequence in $X$ and since $X$ is $G$-complete space, there exists $x^{*,i} \in X$ such that $ x^i_l \longrightarrow x^{*,i}$. 
	
	Since $F$ and $g$ are sequentially continuous, it is easy to see that
	
	\begin{align*}
	x^i_{2l+1} \longrightarrow x^{*,i} & \Longleftrightarrow x^i_{2l+1}=F(x_{2l}^i,x_{2l}^{i+1},\cdots,x_{2l}^n,x_{2l}^1,\cdots,x_{2l}^{i-1})\longrightarrow x^{*,i} \\
	& \Longleftrightarrow F(x^{*,i},x^{*,i+1},\cdots,x^{*,n},x^{*,1},\cdots,x^{*,i-1})= x^{*,i},
	\end{align*}
	and
	
	$$   x^i_{2l+2} \longrightarrow x^{*,i}  \Longleftrightarrow          x^i_{2l+2} = gx^i_{2l+1}  \longrightarrow x^{*,i} \Longleftrightarrow  gx^{*,i}= x^{*,i}.     $$

	and hence $$gx^{*,i}= x^{*,i}=F(x^{*,i},x^{*,i+1},\cdots,x^{*,n},x^{*,1},\cdots,x^{*,i-1}). $$
	Hence $(x^{*,1},\cdots,x^{*,n})$ is a common $n$-tuple fixed point of $F$ and $g$.
\end{proof}

\begin{example}
Let $X = [0, \infty)$ and $G(x, y,z) = \max\{|x - y|,|x-z|,|y-z|\}$, then $(X, G)$ is
a complete $G$-metric space.
%and "$\leq$" 
For any positive real number $a$, let $\phi_a : X \rightarrow \mathbb{R}$ be defined by $\phi_a (x) = ax$, and $\preceq$ be the preorder induced by $\phi_a $. We define $F : X^n \rightarrow X$ and $g: X \rightarrow X$ as
follows

\[
F(x^1,x^2,\cdots,x^n) = x^1 + |\sin(x^1x^2\cdots, x^n)| \text{ and } gx= 5x
\]

If we let $x_0^1=1$ and $x_0^i=0$ for $i=2,\cdots,n$ then $F(x_0^1,x_0^2,x_0^n)=1+0=1$ and $F(x_0^i,x_0^{i+1},\cdots,x_0^n,x_0^1,\cdots,x_0^{i-1})=0$ for $i=2,\cdots,n.$

So
$$x_0^i \preceq F(x_0^i,x_0^{i+1},\cdots,x_0^n,x_0^1,\cdots,x_0^{i-1})\ \ \text{  for all } i, \ 1\leq i \leq n.$$

We have on one hand 
\[
gF(x^i,x^{i+1},\cdots,x^n,x^1,\cdots,x^{i-1}) =5(x^i+|\sin(x^1x^2\cdots, x^n)|),
\]
i.e.

\[
F(x^i,x^{i+1},\cdots,x^n,x^1,\cdots,x^{i-1}) \preceq gF(x^i,x^{i+1},\cdots,x^n,x^1,\cdots,x^{i-1}),
\]

and on the other hand,

\begin{align*}
F(gx^i,gx^{i+1},\cdots,gx^n,gx^1,\cdots,gx^{i-1}) &= F(5x^i,5x^{i+1},\cdots,5x^n,5x^1,\cdots,5x^{i-1})\\
&= 5x^i + |\sin(5^n x^1x^2\cdots, x^n)|,
\end{align*}

i.e.

$$ gx^i \preceq F(gx^i,gx^{i+1},\cdots,gx^n,gx^1,\cdots,gx^{i-1}).$$
And so the pair $\{F, g\}$ is weakly related. Again, it is not hard to see that $F$ and $g$ are sequentially continuous mappings. Hence we see that all the conditions of Theorem \ref{theorem2} are satisfied. Also we have

\[
F(0,x^i,x^{i+1},\cdots,x^n,x^1,\cdots,x^{i-1})=0
\]
for $i=1,\cdots n$ and $g(0)=0$. Thus $\underset{n}{\underbrace{(0,\cdots,0)}}$ is a common $n$-tuple fixed point for $F$ and $g$. 

\end{example}

Now, we present a result on $n$-tuple fixed point for a family of three maps.

\begin{theorem}\label{theorem3}
Let $(X, d)$ be a complete $G$-metric space, $\phi : X \rightarrow \mathbb{R}$ be a bounded from above function and ``$\preceq$" the preorder induced by $\phi$. Let $F : X^n \rightarrow X,$ $n\geq 2$ and $G,H:X\to X$ be three sequentilally continuous mapping on $X$ such that the pairs $\{F,G\}$ and $\{F,H\}$ are weakly related. Then $F,H$ and $G$ have a $n$-tuple fixed point.
\end{theorem}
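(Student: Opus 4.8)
The plan is to imitate the proof of Theorem~\ref{theorem2}, the only genuine change being the iteration: instead of alternating $F$ with a single self-map, I would run the four-step cycle $F,\,G,\,F,\,H$. Before starting I would note that the hypothesis ``there exist $x_0^1,\dots,x_0^n$ with $x_0^i\preceq F(x_0^i,x_0^{i+1},\dots,x_0^{i-1})$'' (omitted from the statement) is in fact free for a weakly related pair: fixing any $y\in X$ and setting $x_0^i=Gy$ for all $i$, the second clause of weak relatedness of $\{F,G\}$ gives $x_0^i=Gy\preceq F(Gy,\dots,Gy)=F(x_0^i,x_0^{i+1},\dots,x_0^{i-1})$, so we do have a legitimate starting $n$-tuple.

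Then for each $i$ I would build $(x_l^i)_l$ by $x_{4l+1}^i=F(x_{4l}^i,x_{4l}^{i+1},\dots,x_{4l}^{i-1})$, $x_{4l+2}^i=Gx_{4l+1}^i$, $x_{4l+3}^i=F(x_{4l+2}^i,x_{4l+2}^{i+1},\dots,x_{4l+2}^{i-1})$, $x_{4l+4}^i=Hx_{4l+3}^i$, and verify $x_l^i\preceq x_{l+1}^i$ for all $l\ge0$ by inspecting $l \bmod 4$. If $l\equiv1$, the first clause of weak relatedness of $\{F,G\}$ applied to the $n$-tuple $(x_{l-1}^1,\dots,x_{l-1}^n)$ gives $x_l^i=F(x_{l-1}^i,\dots)\preceq GF(x_{l-1}^i,\dots)=Gx_l^i=x_{l+1}^i$; if $l\equiv2$, the second clause of $\{F,G\}$ applied to $(x_{l-1}^1,\dots,x_{l-1}^n)$ gives $x_l^i=Gx_{l-1}^i\preceq F(Gx_{l-1}^i,\dots)=F(x_l^i,\dots)=x_{l+1}^i$; if $l\equiv3$, the first clause of $\{F,H\}$ gives similarly $x_l^i=F(x_{l-1}^i,\dots)\preceq Hx_l^i=x_{l+1}^i$; and if $l\equiv0$, then either $l=0$ (initial hypothesis) or $x_l^i=Hx_{l-1}^i$ with $x_{l-1}^i=F(x_{l-2}^i,\dots)$, whence the second clause of $\{F,H\}$ applied to $(x_{l-1}^1,\dots,x_{l-1}^n)$ gives $x_l^i=Hx_{l-1}^i\preceq F(Hx_{l-1}^i,\dots)=F(x_l^i,\dots)=x_{l+1}^i$. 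So $x_0^i\preceq x_1^i\preceq x_2^i\preceq\cdots$ is a $\preceq$-chain for each $i$.

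From here the argument is verbatim that of Theorem~\ref{theorem2}: $(\phi(x_l^i))$ is non-decreasing and, being bounded above, convergent, hence Cauchy; using $x_p^i\preceq x_q^i$ for $p\le q$ and the definition of the induced preorder, $G(x_p^i,x_q^i,x_q^i)\le\phi(x_q^i)-\phi(x_p^i)\to0$, so $(x_l^i)$ is $G$-Cauchy and converges, by completeness, to some $x^{*,i}\in X$. Finally, all four subsequences $(x_{4l}^i)$, $(x_{4l+1}^i)$, $(x_{4l+2}^i)$, $(x_{4l+3}^i)$ converge to $x^{*,i}$, so passing to the limit in $x_{4l+1}^i=F(x_{4l}^i,\dots)$, in $x_{4l+2}^i=Gx_{4l+1}^i$, and in $x_{4l+4}^i=Hx_{4l+3}^i$, and invoking sequential continuity of $F$, $G$, $H$ (together with uniqueness of $G$-limits), yields $F(x^{*,i},x^{*,i+1},\dots,x^{*,i-1})=x^{*,i}$, $Gx^{*,i}=x^{*,i}$, $Hx^{*,i}=x^{*,i}$ for every $i$, i.e.\ $(x^{*,1},\dots,x^{*,n})$ is a common $n$-tuple fixed point of $F$, $G$ and $H$.

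The only subtle point — and the step I would be most careful about — is the choice of the iteration. A three-term cycle $F,\,G,\,H$ fails, because weak relatedness only compares $Gz$ (resp.\ $Hz$) with values $F(z,\dots)$, never with $H$ (resp.\ $G$) applied to an arbitrary point, so there is no way to pass from $Gx$ to $HGx$ directly; interleaving one extra application of $F$ (the cycle $F,\,G,\,F,\,H$) repairs exactly this gap while leaving every other part of the argument intact.
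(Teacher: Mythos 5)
Your proof is correct, but it is not the paper's proof: the paper runs a \emph{three}-step cycle, $x_{3l-2}^i=Hx_{3l-3}^i$, $x_{3l-1}^i=F(x_{3l-2}^i,\dots,x_{3l-2}^{i-1})$, $x_{3l}^i=Gx_{3l-1}^i$, starting from arbitrary $x_0^1,\dots,x_0^n$, and after checking $x_1^i\preceq x_2^i\preceq x_3^i$ simply asserts that ``similarly'' one gets the whole chain $x_0^i\preceq x_1^i\preceq\cdots$. That leaves exactly the transitions you single out unproved: weak relatedness never yields $x_0^i\preceq Hx_0^i$ nor $Gx_{3l-1}^i\preceq H\bigl(Gx_{3l-1}^i\bigr)$, so your closing remark that a bare $F,G,H$-type cycle fails is in fact a criticism that applies to the paper's own iteration. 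Your four-step cycle $F,G,F,H$ is designed so that every consecutive comparison is literally one clause of weak relatedness of $\{F,G\}$ or $\{F,H\}$, and your choice of starting points $x_0^i=Gy$ supplies the comparability hypothesis $x_0^i\preceq F(x_0^i,\dots,x_0^{i-1})$ that the statement of the theorem omits but any such argument needs; in both respects your write-up is tighter than the published proof, at the modest cost of a slightly longer cycle and of extracting the fixed-point identities from four subsequences instead of three. The Cauchy and limit steps you take verbatim from Theorem \ref{theorem2} carry over as you say (with the same small gloss as in the paper: the monotonicity only bounds $G(x_p^i,x_q^i,x_q^i)$ for $p<q$, and one should invoke $G(y,x,x)\le 2G(x,y,y)$, or symmetry of $G$, to match the stated characterization of $G$-Cauchy sequences), so your proposal stands as a valid, indeed more complete, proof of the theorem.
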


\begin{proof}
Let $x_0^1,\cdots, x_0^n\in X$. We construct the sequences $(x_l^i)_l$ in $X$ as follows:

$$  Hx_{3l-3}^i  = x_{3l-2}^i , \qquad x_{3l}^i = G x_{3l-1}^i $$

and 
$$ x_{3l-1}^i = F(x^i_{3l-2},x^{i+1}_{3l-2}\cdots,x^n_{3l-2},x^1_{3l-2},\cdots,x^{i-1}_{3l-2})  $$

for all $l\geq 1.$
We shall show that 
\begin{equation}\label{eq10}
x_l^i \preceq x_{l+1}^i \ \ \  \text{  for all } l\geq 0.
\end{equation}

We have $x_1^i= Hx_0^i $. Since the pair $\{F,H\}$ is weakly related, we have $$x_1^i= Hx_0^i \preceq F(Hx_0^i,Hx_0^{i+1},\cdots,Hx_0^n,Hx_0^1,\cdots,Hx_0^{i-1})=x_2^i.$$ 

Again since the pair $\{F,G\}$ is weakly related, we have 
\begin{align*}
x_2^i &= F(Hx_0^i,Hx_0^{i+1},\cdots,Hx_0^n,Hx_0^1,\cdots,Hx_0^{i-1})\\ &\preceq GF(Hx_0^i,Hx_0^{i+1},\cdots,Hx_0^n,Hx_0^1,\cdots,Hx_0^{i-1})=Gx_2^i =  x_3^i.
\end{align*}
Similarly, using repeatedly the fact that the pairs $\{F, G\}$ and $\{F, H\}$ are weakly related, we get

\[
x_0^i \preceq x_1^i \preceq x_2^i \preceq x_3^i \preceq \cdots \preceq x_l^i \preceq \cdots .
\]

By definition of the preorder, we have

\[
\phi(x_0^i) \preceq \phi(x_1^i) \preceq \phi(x_2^i) \preceq \phi(x_3^i) \preceq \cdots \preceq \phi(x_l^i) \preceq \cdots .
\]

Hence, the sequence $(\phi(x^i_l ))$ is a non-decreasing sequence of real numbers. Since $\phi$ is bounded from above, the sequence $(\phi(x^i_l ))$    converges and is therefore Cauchy. This entails that for any $\epsilon >0$, there exists $n_0 \in \mathbb{N}$ such that for any $q>p>n_0 $ , we have $\phi(x^i_q ) - \phi(x^i_p ) < \epsilon$. Since whenever $q>p>n_0,$  $x^i_q  \preceq x^i_p $ it follows that

\[
d(x^i_p,x^i_x,x_q) \leq \phi(x^i_q ) - \phi(x^i_p )  < \epsilon.
\]

We conclude that $(x^i_l )$ is a $G$-Cauchy sequence in $X$ and since $X$ is $G$-complete, there exists $x^{*,i} \in X$ such that $ x^i_l \longrightarrow x^{*,i}$. Since $F,G$ and $H$ are sequentially continuous, it is easy to see that

\begin{align*}
x^i_{3l-1} \longrightarrow x^{*,i} & \Longleftrightarrow F(x_{3l-2}^i,x_{3l-2}^{i+1},\cdots,x_{3l-2}^n,x_{3l-2}^1,\cdots,x_{3l-2}^{i-1})\longrightarrow x^{*,i} \\
& \Longleftrightarrow F(x^{*,i},x^{*,i+1},\cdots,x^{*,n},x^{*,1},\cdots,x^{*,i-1})= x^{*,i},
\end{align*}
and

$$   x^i_{3l} \longrightarrow x^{*,i}  \Longleftrightarrow          x^i_{3l} = Gx^i_{3l-1}  \longrightarrow x^{*,i} \Longleftrightarrow  Gx^{*,i}= x^{*,i},    $$

and 

$$   x^i_{3l-2} \longrightarrow x^{*,i}  \Longleftrightarrow          x^i_{3l-2} = Hx^i_{3l-3}  \longrightarrow x^{*,i} \Longleftrightarrow  Hx^{*,i}= x^{*,i}.     $$

Therefore $$Hx^{*,i}=Gx^{*,i}= x^{*,i}=F(x^{*,i},x^{*,i+1},\cdots,x^{*,n},x^{*,1},\cdots,x^{*,i-1}). $$

Hence $(x^{*,1},\cdots,x^{*,n})$ is a common $n$-tuple fixed point of $F,G$ and $H$.

\end{proof}

\begin{example}
Let $X = [0, \infty)$ and $d(x, y,z) =\max\{ |x - y|,|x-z|,|y-z|\}$, then $(X, d)$ is
a complete $G$-metric space. For any positive real number $a$, let $\phi_a : X \rightarrow \mathbb{R}$ be defined by $\phi_a (x) = ax$, and $\preceq$ be the preorder induced by $\phi_a $. We define $F : X^n \rightarrow X$, $G: X \rightarrow X$ and $H: X \rightarrow X$ as
follows

\[
F(x^1,x^2,\cdots,x^n) = x^1 + |\sin(x^1x^2\cdots, x^n)|,\ Gx=5x \text{ , and } Hx= 6x.
\]

The pairs $\{F, G\}$ and $\{F,H\}$ are weakly related. Again, it is not hard to see that $F,G$ and $G$ are $d$-sequentially continuous mappings on $X$. Hence we see that all the conditions of our theorem are satisfied. Also we have

\[
F(0,x^i,x^{i+1},\cdots,x^n,x^1,\cdots,x^{i-1})=0
\]
for $i=1,\cdots n$ and $G(0)=0=H(0)$. Thus $\underset{n}{\underbrace{(0,\cdots,0)}}$ is a common $n$-tuple fixed point for $F, G$ and $H$.

\end{example}

Before we state our last result, we give the following definition:

\begin{definition}(Compare \cite[Definition 2.4]{gabao}) 
Let $(X,\preceq)$ be a preordered set and $g,f : X \to X$. We say that the pair $\{g,f\}$ (in this order) is an \textbf{embedded pair} if
\[
g(x) \preceq f(g(x)),  \text{ whenever } \ x\in X.
\]

\vspace*{0.3cm}

We shall say that the family $\{G_1, G_2, \cdots,G_n \}$ (in this order) is a \textbf{$n$-embedded chain} if 
for all $i=1,\cdots, n-1$, the pair $\{G_i,G_{i+1}\}$ is an embedded pair. Observe that an embedded pair is a $2$-embedded chain.

\vspace*{0.3cm}

We shall say that the family $\{G_1, G_2, \cdots,G_n \}$ is a \textbf{dual $n$-embedded chain} if $\{G_1, G_2, \cdots,G_n \}$ and $\{G_n, G_{n-1}, \cdots,G_1 \}$ are $n$-embedded chains.
\end{definition}

\begin{example}(Compare \cite[Example 2.5]{gabao})
Let $X=[2,\pi)$ with the usual order and consider the pairs $\mathcal{F}= \{ F_1(x)=3x, F_2(x)=5x\}$ and $\mathcal{G}=\{ G_1(x)=\sin x +1, G_2(x)=x^2\}$.

\vspace*{0.3cm}
For any $x\in X,$
$$F_1(x)=3x \leq 5(3x)= F_2(F_1(x))\text { and } F_2(x)=5x \leq 3(5x)= F_1(F_2(x)),$$ showing that $\mathcal{F}$ is a dual $2$-embedded chain.

\vspace*{0.3cm}

On the other way around $$x\in X, G_1(x)= \sin x +1 \leq (\sin x +1 )^2= G_2(G_1(x)),$$ showing that $\mathcal{G}$ is an embedded pair, while $$ G_2(x)=x^2 > \sin (x^2) + 1= G_1(G_2(x)),$$ showing that $\mathcal{G}$ is not a dual $2$-embedded chain.
\end{example}

Using the same approach as suggested in the proof of Theorem \ref{theorem3}, one can easily establish that:

\begin{theorem}\label{theorem4}
	Let $(Y, d)$ be a complete $G$-metric space, $\phi : X \rightarrow \mathbb{R}$ be a bounded from above function and ``$\preceq$" the preorder induced by $\phi$. Let $F: Y^n \to Y$ and $G_i : Y \to Y; i= 2, \cdots ,r$ for $r> 2$ be $(r-1)+1$  sequentially continuous mapping on $Y$ such that
	the pairs $\{F, G_r\}; r= 2, 3 $ are weakly related. Moreover, we assume that $\{G_r, G_{r-1}, \cdots,G_3\}$ is an $r-2$-embedded chain.
	Then $F,G_2,\cdots, G_r $ have a common n-tuple fixed point in $Y$.
\end{theorem}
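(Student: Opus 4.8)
The plan is to mimic the proof of Theorem~\ref{theorem3} almost verbatim, merely bookkeeping the extra maps $G_4,\dots,G_r$ via the embedded-chain hypothesis. First I would fix arbitrary starting points $x_0^1,\dots,x_0^n \in Y$ and build, for each coordinate $i$, a single sequence $(x_l^i)_l$ by cycling through the maps $H:=G_2$, then $F$, then $G_3$, then $G_4,\dots,G_r$, and then back to $G_2$, etc.; concretely one sets $x_{l+1}^i$ equal to $G_2 x_l^i$, or $F(x_l^i,x_l^{i+1},\dots,x_l^{i-1})$, or $G_{j}x_l^i$ for the appropriate $j$, according to $l$ modulo $r$. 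The point of interleaving the maps in the order $F,G_3,G_4,\dots,G_r,G_2$ (I would choose the cyclic order so that consecutive applications are governed by a hypothesis) is that each passage from $x_l^i$ to $x_{l+1}^i$ is comparable: the steps $x\mapsto F(\cdots)$ followed by $x\mapsto G_3(\cdot)$ are handled by ``$\{F,G_3\}$ weakly related'', the step producing $Fx$ from $G_2x$ by ``$\{F,G_2\}$ weakly related'', and the steps $G_j\mapsto G_{j+1}$ (for $3\le j\le r-1$) by the $(r-2)$-embedded chain condition $G_j(x)\preceq G_{j+1}(G_j(x))$.

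Second, an induction on $l$ — exactly parallel to the display chain in the proof of Theorem~\ref{theorem3}, just with more cases — yields $x_l^i \preceq x_{l+1}^i$ for all $l\ge 0$, hence the increasing chain $x_0^i \preceq x_1^i \preceq \cdots$ in each coordinate. Third, by the definition of the preorder induced by $\phi$, the real sequence $(\phi(x_l^i))_l$ is non-decreasing, and since $\phi$ is bounded above it converges, hence is Cauchy; telescoping $G(x_p^i,x_q^i,x_q^i)\le \phi(x_q^i)-\phi(x_p^i)$ for $q>p$ shows $(x_l^i)$ is $G$-Cauchy, so by completeness of $Y$ it converges to some $x^{*,i}\in Y$. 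Fourth, sequential continuity of $F$ and of each $G_j$ is applied along the relevant subsequences: the subsequence on which $x_{l+1}^i=F(x_l^i,\dots)$ gives $F(x^{*,i},x^{*,i+1},\dots,x^{*,i-1})=x^{*,i}$, and for each $j$ the subsequence on which $x_{l+1}^i=G_j x_l^i$ gives $G_j x^{*,i}=x^{*,i}$; combining, $(x^{*,1},\dots,x^{*,n})$ is a common $n$-tuple fixed point of $F,G_2,\dots,G_r$.

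The only genuine obstacle is organizational rather than mathematical: one must lay out the single interleaved sequence so that \emph{every} one of the $r$ transition types in one period is covered by exactly one of the available hypotheses — the two ``weakly related'' pairs $\{F,G_2\}$, $\{F,G_3\}$ and the embedded links $G_3\!\to\!G_4\!\to\!\cdots\!\to\!G_r$ — and so that the cycle closes up (the last map of one period feeds admissibly into the first map of the next). With the cyclic block $\bigl(G_2,\;F,\;G_3,\;G_4,\;\dots,\;G_r\bigr)$ the links $G_2\!\to\!F$ and $F\!\to\!G_3$ use weak relatedness, $G_3\!\to\!G_4\!\to\!\cdots\!\to\!G_r$ uses the chain, and $G_r\!\to\!G_2$ (wrapping around) needs $G_r(x)\preceq G_2(G_r(x))$; if that wrap-around is not directly available one instead orders the block so that $F$ reappears between $G_r$ and $G_2$ (e.g.\ $G_2,F,G_3,F,G_4,\dots,F,G_r,F$) so every block boundary is an $F$-step governed by weak relatedness. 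Once the scheduling is fixed, the induction, the $\phi$-monotonicity/Cauchy argument, the completeness step, and the sequential-continuity passage to the limit are all routine repetitions of the arguments already given for Theorems~\ref{theorem2} and \ref{theorem3}.
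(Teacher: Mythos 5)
Your overall architecture (interleave the maps into one sequence per coordinate, prove $x_l^i \preceq x_{l+1}^i$, use $\phi$-monotonicity plus boundedness above to get $G$-Cauchyness, then pass to the limit by sequential continuity) is exactly the paper's, but your concrete scheduling misuses the embedded-chain hypothesis, and that is precisely where this proof lives. By the paper's definition, the pair $\{g,f\}$ \emph{in this order} being embedded means $g(x)\preceq f(g(x))$; hence the hypothesis that $\{G_r,G_{r-1},\dots,G_3\}$ is an $(r-2)$-embedded chain yields $G_{j+1}(x)\preceq G_j(G_{j+1}(x))$ for $3\le j\le r-1$, i.e.\ it licenses following $G_{j+1}$ by $G_j$, so the maps must be applied in \emph{descending} index order $G_r,G_{r-1},\dots,G_3$. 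Your block $(G_2,F,G_3,G_4,\dots,G_r)$ uses the links in ascending order and quotes the condition as $G_j(x)\preceq G_{j+1}(G_j(x))$, which is the reverse chain $\{G_3,G_4,\dots,G_r\}$ and is not assumed. Your fallback block $(G_2,F,G_3,F,G_4,\dots,F,G_r,F)$ fares no better: the transitions $G_j\to F$ and $F\to G_j$ for $j\ge 4$ would require $\{F,G_j\}$ to be weakly related, while only $\{F,G_2\}$ and $\{F,G_3\}$ are assumed. The paper's schedule is: within each period apply $G_r,G_{r-1},\dots,G_3$ (chain links), then $F$ (second weak-relatedness condition of $\{F,G_3\}$, with $g=G_3$), then $G_2$ (first weak-relatedness condition of $\{F,G_2\}$); the fix to your proposal is essentially to reverse your $G$-segment and place $F$ between $G_3$ and $G_2$.

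The wrap-around difficulty you flag is real, but neither of your schedules resolves it under the stated hypotheses --- and, for what it is worth, the paper's own sketch passes over it silently: in its construction the step from $X^i_{rl}=G_2X^i_{rl-1}$ to $X^i_{rl+1}=G_rX^i_{rl}$ would need $G_2(w)\preceq G_r(G_2(w))$, which is not among the assumptions (the analogous boundary step already occurs, unaddressed, in the proof of Theorem \ref{theorem3} at $x^i_3\preceq x^i_4$). So your instinct that the block boundary is the delicate point is sound, but as written your proposal both inverts the available chain and invokes weak-relatedness pairs that are not available, so the monotonicity induction --- and with it the Cauchy argument --- does not go through. Everything downstream of monotonicity (convergence of $(\phi(x^i_l))$, the estimate $G(x^i_p,x^i_q,x^i_q)\le\phi(x^i_q)-\phi(x^i_p)$, completeness, and the sequential-continuity identification of the limit as a common $n$-tuple fixed point) matches the paper and is fine.
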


\begin{proof}
	We give here a sketch of the proof.
	
Let $X_0^1,\cdots, X_0^n\in X$. Observe that the sequences $(X_l^i)_l$ in $Y$ constructed as follows:

\[
G_rX^i_{rl-r} = X^i_{rl-r+1}, \cdots , G_3X^i_{rl-3}=X^i_{rl-2}, \ G_2X^i_{rl-1}=X^i_{rl}
\]
and 
\[
X^i_{rl-1} = F(X^i_{rl-2},X^{i+1}_{rl-2},\cdots,X^n_{rl-2},X^1_{rl-2},\cdots,X^{i-1}_{rl-2}),
\]
for all $l\geq 1.$
are Cauchy and and since $Y$ is complete, there exist $X^{*,i}\in Y$ such that $X_l^i \longrightarrow X^{*,i} $.

Since $F$ and $G_2, \cdots, G_r$ are sequentially continuous, it is easy to see that

\begin{align*}
X_{rl-1}^i \longrightarrow X^{*,i}  & \Longleftrightarrow F(X_{rl-2}^i,X_{rl-2}^{i+1},\cdots,X_{rl-2}^n,X_{rl-2}^1,\cdots,X_{rl-2}^{i-1}) \longrightarrow X^{*,i} \\
&  \Longleftrightarrow F(X^{*,i},X^{*,i+1},\cdots,X^{*,n}, X^{*,1}\cdots,X^{*,i-1})=X^{*,i}
\end{align*}

and
\[
X_{rl-r}^i \longrightarrow X^{*,i} \Longleftrightarrow X_{rl-k+1}^i=G_kX_{rl-k}^i \longrightarrow X^{*,i}  \Longleftrightarrow G_kX^{*,i}=X^{*,i},
\]
and hence 
\[
G_k X^{*,i}= X^{*,i} = F(X^{*,i},X^{*,i+1},\cdots,X^{*,n}, X^{*,1}\cdots,X^{*,i-1}).
\]

Hence $(X^{*,1},X^{*,2},\cdots,X^{*,n})$ is a common n-tuple fixed point of $F$ and $G_2,\cdots, G_r$.

\end{proof}

\begin{example}(Compare \cite[Example 2.7]{gabao})
	
Let $X = [0, \infty)$ and $d(x, y,z) =\max\{ |x - y|,|x-z|,|y-z|\}$, then $(X, d)$ is
	a complete $G$-metric space. For any positive real number $a$, let $\phi_a: X\to \mathbb{R}$ be defined by $\phi_a(x)=ax$, and $\preceq$ the preorder induced by $\phi_a$. We define $F:X^n\to X$ and $G:X\to X$ as follows
	$$F (x^1,x^2,\cdots,x^n) = x^1 + |\sin(x^1x^2\cdots x^n)| \text{ and } G_k(x) = kx , k=2,\cdots, r, \ r>2.$$

	For $k=1,2$ , we have on one hand, 
	
	$$G_kF(x^i,x^{i+1},\cdots,x^n, x^1,\cdots,x^{i-1})=k(x^i + |\sin(x^1x^2\cdots x^n)|),$$ i.e. $$F(x^i,x^{i+1},\cdots,x^n, x^1,\cdots,x^{i-1}) \preceq G_kF(x^i,x^{i+1},\cdots,x^n, x^1,\cdots,x^{i-1}),$$ 
	
	and on the other hand,
	
	\begin{align*}
	F(G_kx^i,G_kx^{i+1},\cdots,G_kx^n, G_kx^1,\cdots,G_kx^{i-1})&= 
	F(kx^i,kx^{i+1},\cdots,kx^n, kx^1,\cdots,kx^{i-1})\\
	& =kx^i+|\sin(k^nx^1x^2\cdots x^n)|, 
	\end{align*}
	i.e. $$G_kx^i\preceq F(G_kx^i,G_kx^{i+1},\cdots,G_kx^n, G_kx^1,\cdots,G_kx^{i-1}).$$
	
	And so the pair $\{F,G_k\}$ are weakly-related for $k=2,3$. Again, it is not hard to see that $F$ and $G_k, k=2,\cdots, r,$ are sequentially continuous mappings on $X$.
	
	Moreover, for any $x\in[0,\infty), \ kx \leq k(k-1) x,  k=2,\cdots, r,$ implying that 
	$\{G_r, G_{r-1}, \cdots,G_3\}$ is an an $r-2$-embedded chain.
	Hence we see that all the conditions of our theorem are satisfied. 
	
	Also we have $$F (0, x^i,x^{i+1},\cdots, x^n,x^1,\cdots,x^{i-2}) = 0= G_k(0)$$ for $k=2,\cdots,r$ and for $i=1,2,\cdots,n$. 
	
	Thus $\underbrace{(0,\cdots,0)}_\text{n}$ is a common n-tuple fixed point of $F,G_2,\cdots, G_r $.

\end{example}

\bibliographystyle{amsplain}

\end{document}